\title{Combinatorial identities involving the M\"obius function}
  \theoremstyle{plain}
  \newtheorem{definition}             {Definition}
  \newtheorem{lemma}      [definition]{Lemma}
  \newtheorem{theorem}    [definition]{Theorem}
  \newtheorem{corollary}  [definition]{Corollary}
  \theoremstyle{remark}
\begin{document}

  \author{M. El Bachraoui \and M. Salim}
  \address{Dept. Math. Sci,
 United Arab Emirates University, PO Box 17551, Al-Ain, UAE}
 \email{melbachraoui@uaeu.ac.ae}
 \email{msalim@uaeu.ac.ae}
 \keywords{Combinatorial identities, M\"obius function, Relatively prime sets, Phi functions,
   M\"obius inversion formula}
 \subjclass{11A25, 11B05, 11B75}
 \thanks{Corresponding author: M. El Bachraoui, E-mail: melbachraoui@uaeu.ac.ae}
  %
  \begin{abstract}
  In this paper we derive some identities and inequalities on the M\"obius mu function. Our main
  tool is phi functions for intervals of positive integers and their unions.
  \end{abstract}
  \date{\textit{\today}}
  \maketitle
  %
%
 \section{Introduction} \label{sec:introduction}
 The \emph{M\"obius mu} function $\mu$ is an important arithmetic function
 in number theory and combinatorics which appears in various
 identities. We mention
 the following identities which are well-known and can be found
 in books on elementary number theory and arithmetic functions. Let $n$ be a positive integer. Then
 \[
 \sum_{d|n} \mu(d) =
 \begin{cases}
 1,\ \text{if $n=1$} \\
 0,\ \text{if $n>1$.}
 \end{cases}
 \]
 If $\tau(n)$ is the number of divisors of $n$, then
 \[
 \sum_{d|n} \mu(d) \tau(n/d) = 1.
 \]
 If $n= \prod_{i=1}^{r} p_i ^{k_i}$ is the prime decomposition of $n$, then
 \[
 \sum_{d|n} \mu(d) \lambda (d) = 2^r,
 \]
 where $\lambda$ denotes the \emph{Liouville lambda} function defined as follows:\\
 If $m=\prod_{i=1}^{s} p_i ^{l_i}$ is the prime decomposition of $m$, then
 \[ \lambda (m) = (-1)^{\sum_{i=1}^{s} l_i}. \]
 In this note we give some other identities on the M\"obius mu function. Our proofs are
 combinatorial with phi functions as a main tool.
 For a survey on combinatorial identities we refer to \cite{Hall, Riordan} and their references.
 We now list some examples of identities which we intend to prove.
 Let $m$ and $n$ be positive integers such that $n>1$. Then

 \noindent
 1)
 \begin{align*}
 \sum_{d|n}\mu(d) 2^{\lfloor m/d \rfloor - \lfloor (m-1)/d \rfloor} &=
 \sum_{d|n}\mu(d) \left(\lfloor m/d \rfloor - \lfloor (m-1)/d \rfloor \right) \\
 &=
 \begin{cases}
 0,\ \text{if\ } \gcd(m,n) >1 \\
 1,\ \text{if\ } \gcd(m,n)=1.
 \end{cases}
 \end{align*}

 \noindent
 2)
 \begin{align*}
 \sum_{d|n}\mu(d) 2^{\lfloor (m+1)/d \rfloor - \lfloor (m-1)/d \rfloor} &=
 \sum_{d|n}\mu(d) \binom{\lfloor (m+1)/d \rfloor - \lfloor (m-1)/d \rfloor +1}{2} \\
 &= 1 + \sum_{d|n}\mu(d) \lfloor (m+1)/d \rfloor - \lfloor (m-1)/d \rfloor \\
 &=
 \begin{cases}
  1, \ \text{if $(m,n)>1$ and $(m+1,n)>1$} \\
  2,\ \text{if $(m,n)=1$ and $(m+1,n)>1$
           or $(m,n)>1$ and $(m+1,n)=1$} \\
  3,\ \text{if $(m,n)=(m+1,n)=1$}.
  \end{cases}
 \end{align*}
 For the sake of completeness we include the following result which is a natural
 extension of \cite[Theorem 2 (a)]{ElBachraoui1} on M\"obius inversion for arithmetical functions in several variables.
 For simplicity we let
 \[
 (\overline{m}_a,\overline{n}_b)= (m_1,m_2,\ldots,m_a,n_1,n_2,\ldots,n_{b})
 \]
 and
 \[
 \left( \frac{\overline{m}_a}{d}, \left[\frac{\overline{n}_b}{d}\right]\right) =
 \left( \frac{m_1}{d},\frac{m_2}{d}, \ldots, \frac{m_a}{d},
 \left[\frac{n_1}{d}\right],\left[\frac{n_2}{d}\right],\ldots,\left[\frac{n_{b}}{d}\right]\right).
 \]
 \begin{theorem} \label{thm:inversion}
 If $F$ and $G$ are arithmetical of $a+b$ variables,
 then
 \[
 G(\overline{m}_a,\overline{n}_b)=\sum_{d|(m_1,m_2,\ldots,m_a)}F
 \left(\frac{\overline{m}_a}{d},
\left[\frac{\overline{n}_b}{d}\right]\right)\]
if and only if
\[
 F(\overline{m}_a,\overline{n}_b)=\sum_{d|(m_1,m_2,\ldots,m_a)}\mu(d)G\left(\frac{\overline{m}_a}{d},
 \left[\frac{\overline{n}_b}{d}\right]\right).
 \]
 \end{theorem}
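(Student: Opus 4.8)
The plan is to follow the classical proof of M\"obius inversion, the one new ingredient being the behaviour of the floor function under repeated division. First I would record the elementary identity
\[
\left[ \frac{\left[ x/d \right]}{e} \right] = \left[ \frac{x}{de} \right],
\]
valid for every nonnegative integer $x$ and all positive integers $d,e$. Together with the fact that $m_i/d$ is again a positive integer whenever $d\,|\,\gcd(m_1,\ldots,m_a)$, this identity is what allows the floor coordinates $n_1,\ldots,n_b$ to be manipulated, after substitution, in exactly the same way as the coordinates $m_1,\ldots,m_a$ on which division is exact.

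Assume the first identity of the statement. Plugging it into the right-hand side of the second and abbreviating $g=\gcd(m_1,\ldots,m_a)$, I would obtain
\[
\sum_{d|g}\mu(d)\, G\!\left(\frac{\overline{m}_a}{d},\left[\frac{\overline{n}_b}{d}\right]\right)
=\sum_{d|g}\mu(d)\sum_{e\,|\,\gcd(m_1/d,\ldots,m_a/d)} F\!\left(\frac{\overline{m}_a}{de},\left[\frac{\left[\overline{n}_b/d\right]}{e}\right]\right).
\]
Applying the floor identity componentwise to the last $b$ arguments turns each $\left[\left[n_j/d\right]/e\right]$ into $\left[n_j/(de)\right]$, so the summand depends on $d$ and $e$ only through the product $k=de$. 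Since the conditions $d\,|\,g$ and $e\,|\,\gcd(m_1/d,\ldots,m_a/d)=g/d$ together amount to $de\,|\,g$, I would reindex the double sum by $k=de$ (so that $d$ runs over the divisors of $k$ and $e=k/d$), which gives
\[
\sum_{k|g} F\!\left(\frac{\overline{m}_a}{k},\left[\frac{\overline{n}_b}{k}\right]\right)\sum_{d|k}\mu(d).
\]
By the first of the classical identities recalled in the Introduction the inner sum equals $0$ unless $k=1$, so only the term $k=1$ survives and the right-hand side collapses to $F(\overline{m}_a,\overline{n}_b)$, which is the second identity.

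For the converse I would run the identical computation in the other direction: starting from $\sum_{d|g} F\!\left(\overline{m}_a/d,\left[\overline{n}_b/d\right]\right)$, substitute the second identity, collapse the iterated floors with the same elementary identity, reindex by $k=de$, and use that $\sum_{d|k}\mu(d)$ vanishes for $k>1$ to recover $G(\overline{m}_a,\overline{n}_b)$.

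I do not anticipate a genuine obstacle: this is the standard argument of swapping a double divisor sum and annihilating it with $\sum_{d|k}\mu(d)$. The only points requiring care are checking that the iterated floors collapse exactly as claimed and keeping the bookkeeping of the $a+b$ coordinates straight — in particular the two distinct roles played by the divisor $d$, namely exact division on the first $a$ slots and floored division on the last $b$ slots. Once the floor identity is available, the argument is essentially that of \cite[Theorem 2 (a)]{ElBachraoui1}.
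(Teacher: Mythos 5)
Your proof is correct: the nested-floor identity $\left[\left[x/d\right]/e\right]=\left[x/(de)\right]$, the observation that $\gcd(m_1/d,\ldots,m_a/d)=\gcd(m_1,\ldots,m_a)/d$, the reindexing by $k=de$, and the annihilation of all terms with $k>1$ via $\sum_{d|k}\mu(d)=0$ together give a complete argument for both directions (in the converse the surviving inner sum is $\sum_{e|k}\mu(e)$ rather than $\sum_{d|k}\mu(d)$, a purely notational point). The paper itself gives no proof of this theorem, stating it only as a natural extension of \cite[Theorem 2 (a)]{ElBachraoui1}, and your argument is exactly the standard one that fills this gap.
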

\section{Phi functions} \label{sec:phifunction}
 Throughout let $k$, $l$, $m$, $l_1$, $l_2$, $m_1$, $m_2$ and $n$ be positive integers such that $l\leq m$,
 $l_1\leq m_1$ and $l_2\leq m_2$, let
 $[l,m]=\{l,l+1,\ldots,m\}$, and let $A$ be a nonempty finite set of positive integers.
 The set $A$ is called \emph{relatively prime to $n$}
  if $\gcd(A\cup \{n\}) = \gcd(A,n) = 1$.
  \begin{definition}
  Let
  \[ \Phi(A,n) = \# \{X\subseteq A:\ X\not=
  \emptyset\ \text{and\ } \gcd(X,n) = 1 \}
  \]
  and
  \[ \Phi_k (A,n) = \# \{X\subseteq A:\ \# X=
  k \ \text{and\ } \gcd(X,n) = 1 \}.
  \]
 \end{definition}
 Nathanson, among other things, introduced $\Phi(n)$ and $\Phi_k(n)$ (in our terminology $\Phi([1,n],n)$
 and $\Phi_k([1,n],n)$ respectively) along with their formulas in \cite{Nathanson}.
 Formulas for $\Phi([m,n],n)$ and $\Phi_k([m,n],n)$ can be found in \cite{ElBachraoui1, Nathanson_Orosz}
  and formulas for $\Phi([1,m],n)$ and $\Phi_k([1,m],n)$ are obtained in
  \cite{ElBachraoui2}.
 Ayad and Kihel in \cite{Ayad-Kihel1, Ayad-Kihel2} considered extensions to sets in arithmetic progression and obtained
 formulas for $\Phi([l,m],n)$ and $\Phi_k ([l,m],n)$ as consequences.
 Recently the following formulas for $\Phi([1,m_1]\cup [l_2,m_2])$ and $\Phi_k([1,m_1]\cup [l_2,m_2])$ have been found in
 \cite{ElBachraoui3}.
 \begin{theorem}\label{thm:main1}
 We have
  \[
 \begin{split} (a)\quad
  \Phi([1,m_1]\cup [l_2,m_2],n) &= \sum_{d|n}\mu(d)
   2^{\lfloor \frac{m_1}{d} \rfloor +\lfloor \frac{m_2}{d} \rfloor - \lfloor\frac{l_2 -1}{d} \rfloor}, \\
  (b)\quad
  \Phi_k ([1,m_1]\cup [l_2,m_2],n) &=
   \sum_{d|n} \mu(d)
   \ \binom{\lfloor \frac{m_1}{d} \rfloor +\lfloor \frac{m_2}{d} \rfloor - \lfloor\frac{l_2 -1}{d} \rfloor}{k}.
   \end{split}
  \]
 \end{theorem}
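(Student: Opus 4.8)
The plan is to establish part (b) by a Möbius‑indicator double–counting argument and then to read off part (a) from (b) by summing over the size parameter $k$. Throughout put $A=[1,m_1]\cup[l_2,m_2]$; for this to be a genuine union of two separate blocks we take $m_1<l_2$, so that $[1,m_1]$ and $[l_2,m_2]$ are disjoint. The first step is to record, for each $d\mid n$, the number of multiples of $d$ lying in $A$. Since the multiples of $d$ in $[1,m_1]$ number $\lfloor m_1/d\rfloor$ and those in $[l_2,m_2]$ number $\lfloor m_2/d\rfloor-\lfloor (l_2-1)/d\rfloor$, disjointness gives
\[
q_d:=\#\{x\in A:\ d\mid x\}=\Big\lfloor\frac{m_1}{d}\Big\rfloor+\Big\lfloor\frac{m_2}{d}\Big\rfloor-\Big\lfloor\frac{l_2-1}{d}\Big\rfloor .
\]

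Next I would use the elementary identity $\sum_{e\mid g}\mu(e)=1$ if $g=1$ and $0$ otherwise, recalled in the Introduction, applied with $g=\gcd(X,n)$ for a nonempty $X\subseteq A$. Observing that the conditions ``$d\mid\gcd(X,n)$'' and ``$d\mid n$ and $d\mid x$ for every $x\in X$'' are equivalent, I would interchange the order of summation:
\[
\Phi_k(A,n)=\sum_{\substack{X\subseteq A\\ \#X=k}}\big[\gcd(X,n)=1\big]
=\sum_{\substack{X\subseteq A\\ \#X=k}}\ \sum_{d\mid\gcd(X,n)}\mu(d)
=\sum_{d\mid n}\mu(d)\,\#\{X\subseteq A:\ \#X=k,\ d\mid x\ \forall x\in X\}.
\]
The last inner set is exactly the family of $k$-element subsets of the set of multiples of $d$ in $A$, a set of size $q_d$; hence the inner count is $\binom{q_d}{k}$, and substituting $q_d$ yields part (b).

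For part (a), sum the identity of (b) over all $k\ge 1$ and interchange the two finite sums, using $\sum_{k\ge 0}\binom{N}{k}=2^{N}$:
\[
\Phi(A,n)=\sum_{k\ge 1}\Phi_k(A,n)=\sum_{d\mid n}\mu(d)\sum_{k\ge 1}\binom{q_d}{k}
=\sum_{d\mid n}\mu(d)\big(2^{q_d}-1\big)=\sum_{d\mid n}\mu(d)\,2^{q_d}-\sum_{d\mid n}\mu(d).
\]
When $n>1$ the last sum vanishes, which produces the stated formula. (For $n=1$ the extra term $-1$ would survive, accounting for the excluded empty set; this is precisely why the proviso $n>1$ is in force in this paper.)

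There is no deep step here. The two places that require genuine care are the evaluation of $q_d$ — one must be sure the blocks $[1,m_1]$ and $[l_2,m_2]$ are disjoint, i.e. $m_1<l_2$, since otherwise the three floor terms mis‑count the multiples of $d$ — and the treatment of the empty set in part (a), which is exactly what forces the $n>1$ hypothesis. One could alternatively organize the same computation through the inversion principle of Theorem~\ref{thm:inversion}, taking $n$ as the single ``true'' divisor variable and $m_1,m_2,l_2-1$ as floor variables and letting $G$ be the total subset count; but the direct combinatorial argument above is the most transparent.
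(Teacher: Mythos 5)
Your argument is correct, and it follows a genuinely different route from the paper: the paper does not prove Theorem~\ref{thm:main1} itself but quotes it from \cite{ElBachraoui3}, and the analogous results it does prove (Lemma~\ref{lemma1}, Lemma~\ref{lemma2}, Theorem~\ref{main2}) are obtained by partitioning the relevant family of subsets according to the common value of $\gcd(X,n)$ and then invoking the inversion formula of Theorem~\ref{thm:inversion}, together with the device of replacing $n$ by a power $n^{a}$ when $m_2>n$. You instead insert the indicator identity $\sum_{d\mid g}\mu(d)=1$ or $0$ according as $g=1$ or $g>1$ and interchange the two finite sums; this is more elementary and self-contained, needs neither the inversion theorem nor the $n^{a}$ trick (your sum runs over $d\mid n$ from the outset), and it makes visible the two hypotheses implicit in the statement: the blocks must be disjoint ($m_1<l_2$) for $q_d$ to count the multiples of $d$ in $A$, and part (a) requires $n>1$, since at $n=1$ the displayed formula overcounts by exactly the empty set --- a caveat you correctly isolate through the surviving term $\sum_{d\mid n}\mu(d)$. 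What the paper's inversion machinery buys is uniformity: the same Theorem~\ref{thm:inversion} drives Lemma~\ref{lemma1} and Theorem~\ref{main2}, where the subsets are forced to contain prescribed elements and the plain ``$k$-subsets of the multiples of $d$'' count would need adaptation; what your argument buys is brevity and transparency for the statement at hand.
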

 \section{Phi functions for $[l_1,m_1]\cup [l_2,m_2]$}
 We need the following two lemmas.
 \begin{lemma} \label{lemma1}
 Let
 \[
 \Psi(l_1,m_1,l_2,m_2, n)= \# \{X \subseteq [l_1,m_1]\cup [l_2,m_2]:\ l_1, l_2\in X\
 \text{and\ } \gcd(X,n)=1 \},
 \]
 \[
 \Psi_k(l_1,m_1,l_2,m_2,n)=\# \{X \subseteq [l_1,m_1]\cup [l_2,m_2]:\ l_1,l_2\in X,\ |X| = k,\
 \text{and\ } \gcd(X,n)=1 \}.
 \]
 Then
 \[
 \text{(a)\quad } \Psi(l_1,m_1,l_2,m_2, n) = \sum_{d|(l_1,l_2,n)}\mu(d)
  2^{\lfloor \frac{m_1}{d} \rfloor + \lfloor \frac{m_2}{d} \rfloor- \frac{l_1+l_2}{d}}, \]
\[
 \text{(b)\quad } \Psi_k (m_1,l_2,m_2, n) = \sum_{d|(l_1,l_2,n)}\mu(d)
 \binom{\lfloor \frac{m_1}{d} \rfloor + \lfloor \frac{m_2}{d} \rfloor- \frac{l_1+l_2}{d}}{k-2}.
 \]
 \end{lemma}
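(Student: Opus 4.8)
The plan is to evaluate $\Psi$ and $\Psi_k$ directly by an inclusion--exclusion argument over the divisors of $n$, in the same spirit as the proof of Theorem~\ref{thm:main1}. Throughout I assume, as is implicit in the heading of this section, that the two intervals are disjoint, i.e.\ $m_1 < l_2$; without this the multiples of a divisor $d$ in the union would be miscounted.

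First I would start from the elementary fact that, for a fixed admissible set $X$ with $l_1,l_2\in X$, the sum $\sum_{d\mid (X,n)}\mu(d)$ equals $1$ when $\gcd(X,n)=1$ and $0$ otherwise. Summing this over all $X\subseteq[l_1,m_1]\cup[l_2,m_2]$ with $l_1,l_2\in X$ and interchanging the two finite sums yields
\[
\Psi(l_1,m_1,l_2,m_2,n)=\sum_{d\mid n}\mu(d)\,N_d ,
\]
where $N_d$ counts the subsets $X$ of $[l_1,m_1]\cup[l_2,m_2]$ that contain $l_1$ and $l_2$ and all of whose elements are divisible by $d$. Clearly $N_d=0$ unless $d\mid l_1$ and $d\mid l_2$, so only the divisors $d\mid(l_1,l_2,n)$ survive, which already explains the range of summation in the claimed formulas.

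Next, fix $d\mid(l_1,l_2,n)$. Since $d\mid l_i$ we have $\lfloor (l_i-1)/d\rfloor = l_i/d-1$, so the number of multiples of $d$ in $[l_i,m_i]$ is $\lfloor m_i/d\rfloor - l_i/d + 1$. By disjointness the two intervals together contain $\bigl(\lfloor m_1/d\rfloor - l_1/d + 1\bigr)+\bigl(\lfloor m_2/d\rfloor - l_2/d + 1\bigr)$ multiples of $d$, among which $l_1$ and $l_2$ are two distinguished elements that every counted $X$ must contain. Writing
\[
e(d)=\Bigl\lfloor \frac{m_1}{d}\Bigr\rfloor+\Bigl\lfloor \frac{m_2}{d}\Bigr\rfloor-\frac{l_1+l_2}{d}
\]
for the number of the remaining (free) multiples of $d$, we get $N_d=2^{e(d)}$, which proves (a). For (b), a set $X$ of cardinality $k$ containing $l_1$ and $l_2$ arises precisely by choosing $k-2$ of these $e(d)$ free multiples, so there $N_d=\binom{e(d)}{k-2}$, giving the stated formula.

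All steps are routine; the only points that require attention are the floor identity $\lfloor (l_i-1)/d\rfloor = l_i/d-1$ for $d\mid l_i$, which is what produces the clean exponent $\lfloor m_1/d\rfloor+\lfloor m_2/d\rfloor-(l_1+l_2)/d$ rather than a messier one, and making the disjointness hypothesis $m_1<l_2$ explicit so that the two interval counts do not overlap. I do not expect any substantive obstacle beyond this bookkeeping.
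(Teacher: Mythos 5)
Your argument is correct, but it proceeds differently from the paper. The paper partitions the family of subsets $X\subseteq[l_1,m_1]\cup[l_2,m_2]$ with $l_1,l_2\in X$ according to the value $d=\gcd(X,n)$, uses the dilation bijection $X\mapsto\frac{1}{d}X$ to get the counting identity $2^{m_1+m_2-l_1-l_2}=\sum_{d\mid(l_1,l_2,n)}\Psi\bigl(l_1/d,\lfloor m_1/d\rfloor,l_2/d,\lfloor m_2/d\rfloor,n/d\bigr)$, and then invokes the multivariable M\"obius inversion of Theorem~\ref{thm:inversion}; since that argument is first run under the hypothesis $m_2\le n$, the paper also needs a separate reduction (replacing $n$ by a power $n^a$ and using $\mu(d)=0$ for non-squarefree $d$) to remove that hypothesis, and part (b) repeats the scheme with the cardinality-preserving property of the bijection. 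You instead expand the coprimality indicator as $\sum_{d\mid\gcd(X,n)}\mu(d)$, interchange the two finite sums, and count multiples of $d$ in the union directly, observing that only $d\mid(l_1,l_2,n)$ contribute because $l_1,l_2\in X$; this is a clean, self-contained inclusion--exclusion that needs neither the inversion theorem nor any case distinction between $m_2\le n$ and $m_2>n$, and it handles (a) and (b) simultaneously since the same interchange works with the size restriction $|X|=k$ in place. What the paper's route buys is consistency with its general framework (Theorem~\ref{thm:inversion} is stated and reused elsewhere, and the intermediate partition identity has independent interest); what yours buys is brevity and fewer moving parts. You are also right to make explicit the disjointness hypothesis $m_1<l_2$, which the paper leaves tacit even though it is needed already for the count $2^{m_1+m_2-l_1-l_2}$ (and indeed for the statement of the lemma itself), and your use of $\lfloor(l_i-1)/d\rfloor=l_i/d-1$ for $d\mid l_i$ is exactly the bookkeeping that yields the stated exponent.
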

 \begin{proof}
 (a) Assume first that $m_2\leq n$. Let $\mathcal{P}(l_1,m_1,l_2,m_2)$ denote the set of subsets of
 $[l_1,m_1]\cup[l_2,m_2]$
 containing $l_1$ and $l_2$ and let $\mathcal{P}(l_1,m_1,l_2,m_2,d)$ be the set of subsets $X$ of
 $[l_1,m_1]\cup[l_2,m_2]$ such that
 $l_1,l_2\in X$ and $\gcd(X,n) = d$. It is clear that the set $\mathcal{P}(l_1,m_1,l_2,m_2)$
 of cardinality $2^{m_1+m_2-l_1-l_2}$  can be
 partitioned using the equivalence relation of having the same $\gcd$ (dividing $l_1$, $l_2$ and $n$).
 Moreover, the mapping
 $A \mapsto \frac{1}{d} X$
 is a one-to-one correspondence between
 $\mathcal{P}(l_1,m_1,l_2,m_2,d)$ and the
 set of subsets $Y$ of
 $[l_1/d, \lfloor m_1/d \rfloor ]\cup [l_2/d,\lfloor m_2/d \rfloor]$
 such that $l_1/d, l_2/d \in Y$ and $\gcd(Y,n/d)= 1$. Then
 \[
  \# \mathcal{P}(l_1,m_1,l_2,m_2,d) =
  \Psi(l_1/d, \lfloor m_1/d \rfloor,l_2 /d,\lfloor m_2/d \rfloor,n/d).
 \]
 Thus
 \[
 2^{m_1+m_2-l_1-l_2} = \sum_{d|(l_1,l_2,n)} \# \mathcal{P}(l_1,m_1,l_2,m_2,d)=
 \sum_{d|(l_1,l_2,n)} \Psi (l_1/d,\lfloor m_1/d \rfloor,l_2 /d,\lfloor m_2/d \rfloor,n/d),
 \]
 which by Theorem \ref{thm:inversion} is equivalent to
 \[
 \Psi(l_1,m_1,l_2,m_2,n) = \sum_{d|(l_1,l_2,n)}\mu(d)
 2^{\lfloor m_1/d \rfloor + \lfloor m_2/d \rfloor - (l_1+l_2)/d}.
 \]
 Assume now that $m_2 >n$ and let $a$ be a positive integer such that $m_2 \leq n^a$.
 As $\gcd(X,n)=1$ if and only if $\gcd(X,n^a)=1$ and $\mu(d) =0$ whenever $d$ has a nontrivial
 square factor, we have
 \[
 \begin{split}
 \Psi(l_1,m_1,l_2,m_2,n) &= \Psi(l_1,m_1,l_2,m_2,n^a) \\
 &=
 \sum_{d|(l_1,l_2,n^a)}\mu(d)
 2^{\lfloor m_1/d \rfloor + \lfloor m_2/d \rfloor - (l_1+l_2)/d} \\
 &=
 \sum_{d|(l_1,l_2,n)}\mu(d)
 2^{\lfloor m_1/d \rfloor + \lfloor m_2/d \rfloor - (l_1+l_2)/d}.
 \end{split}
 \]
 (b) For the same reason as before, we may assume that $m_2 \leq n$.
  Noting that the correspondence
 $X\mapsto \frac{1}{d} X$ defined above preserves the cardinality and using an argument similar to
 the one in part (a), we obtain the following identity
 \[
  \binom{m_1+m_2-l_1-l_2}{k-2}=
  \sum_{d|(l_1,l_2,n)}\Psi_k (l_1/d,\lfloor m_1/d \rfloor,l_2 /d,\lfloor m_2/d \rfloor, n/d )
  \]
  which by Theorem \ref{thm:inversion} is equivalent to
  \[
   \Psi_k (l_1,m_1,l_2,m_2,n) =
   \sum_{d|(l_1,l_2,n)}\mu(d)\binom{\lfloor m_1/d \rfloor + \lfloor m_2/d \rfloor - (l_1+l_2)/d }{k-2}.
 \]
\end{proof}
 By arguments similar to the ones in the proof of Lemma \ref{lemma1} we have:
 \begin{lemma} \label{lemma2}
 Let
 \[
 \psi(l,m,n) = \# \{X\subseteq [l,m]:\ l \in X\ \text{and\ } \gcd(X,n)=1 \},
 \]
 \[
 \psi_k(l,m,n) = \# \{X\subseteq [l,m]:\ l \in X,\ \# X=k,\ \text{and\ } \gcd(X,n)=1 \}.
 \]
 Then
 \[
 \psi(l,m,n) = \sum_{d|(l,n)} \mu(d) 2^{\lfloor m/d \rfloor - l/d} ,
 \]
 \[
 \psi_k(l,m,n) = \sum_{d|(l,n)} \mu(d) \binom{\lfloor m/d \rfloor - l/d}{k}.
 \]
 \end{lemma}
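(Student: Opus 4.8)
The plan is to imitate the proof of Lemma \ref{lemma1}, specialised to a single interval $[l,m]$ with the single prescribed element $l$ in place of two intervals with the two prescribed elements $l_{1},l_{2}$. First I would treat $\psi(l,m,n)$. Let $\mathcal{Q}$ be the family of subsets of $[l,m]$ containing $l$; each such set is $\{l\}$ together with an arbitrary subset of $[l+1,m]$, so $\#\mathcal{Q}=2^{m-l}$. Following the same two-step device as in Lemma \ref{lemma1} — first assume $m\le n$, then reduce the general case to a power $n^{a}\ge m$, using $\gcd(X,n)=1\iff\gcd(X,n^{a})=1$ together with $\mu(d)=0$ for non-squarefree $d$ to collapse a sum over $d\mid(l,n^{a})$ to a sum over $d\mid(l,n)$ — I would partition $\mathcal{Q}$ by the value $d=\gcd(X,n)$, which runs over the divisors of $\gcd(l,n)$ because $l\in X$. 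The dilation $X\mapsto\tfrac1d X$ is a bijection from $\{X\in\mathcal{Q}:\gcd(X,n)=d\}$ onto the family of subsets $Y$ of $[\,l/d,\lfloor m/d\rfloor\,]$ with $l/d\in Y$ and $\gcd(Y,n/d)=1$, so
\[
2^{m-l}=\sum_{d\mid(l,n)}\psi\!\left(l/d,\ \lfloor m/d\rfloor,\ n/d\right).
\]

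Viewing the left side as an arithmetical function of $(l,n,m)$ and applying Theorem \ref{thm:inversion} with $a=2$ (the exactly divided variables being $l$ and $n$) and $b=1$ (the floored variable being $m$) inverts this identity into
\[
\psi(l,m,n)=\sum_{d\mid(l,n)}\mu(d)\,2^{\lfloor m/d\rfloor-l/d}.
\]
For $\psi_{k}$ the argument is the same once one observes that $X\mapsto\tfrac1d X$ preserves cardinality: the number of $k$-element members of $\mathcal{Q}$ is $\binom{m-l}{k-1}$, obtained by choosing the $k-1$ elements of $X\setminus\{l\}$ inside $[l+1,m]$, whence
\[
\binom{m-l}{k-1}=\sum_{d\mid(l,n)}\psi_{k}\!\left(l/d,\ \lfloor m/d\rfloor,\ n/d\right),
\]
and Theorem \ref{thm:inversion} again gives
\[
\psi_{k}(l,m,n)=\sum_{d\mid(l,n)}\mu(d)\binom{\lfloor m/d\rfloor-l/d}{k-1}.
\]

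The only part needing real care — and it is precisely the part left implicit in Lemma \ref{lemma1} — is the verification that the dilation is the asserted bijection: if $\gcd(X,n)=d$ then $d\mid x$ for every $x\in X$ and $d\mid l$, so $l/d$ is an integer, $\tfrac1d X\subseteq[\,l/d,\lfloor m/d\rfloor\,]$, and $l/d\in\tfrac1d X$; conversely $\gcd(\tfrac1d X,\,n/d)=1$ is equivalent to $\gcd(X,n)=d$ since the gcd scales, and $Y\mapsto dY$ is the inverse map. Together with the passage from $m\le n$ to arbitrary $m$, this is the whole content, the two inversions being immediate applications of Theorem \ref{thm:inversion}. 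Finally, note that prescribing one element shifts the binomial index to $k-1$, in exact parallel with the index $k-2$ produced in Lemma \ref{lemma1} by prescribing the two elements $l_{1},l_{2}$.
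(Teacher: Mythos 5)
Your argument is exactly the one the paper intends: for Lemma \ref{lemma2} the authors give no separate proof, only the remark that it follows ``by arguments similar to the ones in the proof of Lemma \ref{lemma1}'', and your partition of the $2^{m-l}$ subsets of $[l,m]$ containing $l$ by $d=\gcd(X,n)\mid(l,n)$, the dilation bijection, the $n^{a}$ reduction, and the appeal to Theorem \ref{thm:inversion} reproduce that argument faithfully; the formula for $\psi(l,m,n)$ comes out exactly as stated. Note, though, that your conclusion $\psi_{k}(l,m,n)=\sum_{d\mid(l,n)}\mu(d)\binom{\lfloor m/d\rfloor-l/d}{k-1}$ differs from the printed statement, which has lower index $k$; your version is the correct one (prescribing one element leaves $k-1$ free choices, in parallel with the index $k-2$ of Lemma \ref{lemma1}, and the case $l=m$, $k=1$ already refutes the printed index), so the discrepancy is a misprint in the lemma rather than a gap in your proof.
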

 We are now ready to prove the main theorem of this section.
 \begin{theorem} \label{main2}
 We have
 \[ (a)\quad
 \Phi([l_1,m_1]\cup[l_2,m_2]) = \sum_{d|n}\mu(d)
  2^{\lfloor \frac{m_1}{d} \rfloor + \lfloor \frac{m_2}{d} \rfloor -
    \lfloor \frac{l_1-1}{d} \rfloor - \lfloor \frac{l_2-1}{d} \rfloor},
 \]
 \[ (b)\quad
 \Phi_k([l_1,m_1]\cup[l_2,m_2]) = \sum_{d|n}\mu(d)
  \binom{\lfloor \frac{m_1}{d} \rfloor + \lfloor \frac{m_2}{d} \rfloor -
    \lfloor \frac{l_1-1}{d} \rfloor - \lfloor \frac{l_2-1}{d} \rfloor}{k}.
 \]
 \end{theorem}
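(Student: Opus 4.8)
The plan is to count the nonempty $X\subseteq[l_1,m_1]\cup[l_2,m_2]$ with $\gcd(X,n)=1$ by sorting them according to which of the two intervals (disjoint, as $m_1<l_2$) they meet. Every such $X$ is of exactly one of three types: it misses $[l_1,m_1]$; it misses $[l_2,m_2]$; or it meets both. A first-type $X$ is a nonempty subset of $[l_2,m_2]$, and writing $s=\min X$ it is counted by $\psi(s,m_2,n)$, so this type contributes $\sum_{s=l_2}^{m_2}\psi(s,m_2,n)$; the second type is symmetric and contributes $\sum_{r=l_1}^{m_1}\psi(r,m_1,n)$. For a third-type $X$, set $r=\min\bigl(X\cap[l_1,m_1]\bigr)$ and $s=\min\bigl(X\cap[l_2,m_2]\bigr)$; since $m_1<l_2\le s$, the set $X$ lies in $[r,m_1]\cup[s,m_2]$, contains both $r$ and $s$, and has $r$, $s$ least in the respective parts, so it is counted by $\Psi(r,m_1,s,m_2,n)$, and this type contributes $\sum_{r=l_1}^{m_1}\sum_{s=l_2}^{m_2}\Psi(r,m_1,s,m_2,n)$. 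Thus $\Phi([l_1,m_1]\cup[l_2,m_2],n)$ is the sum of these three pieces, and the same bookkeeping with $\psi_k$, $\psi_k$, $\Psi_k$ in place of $\psi$, $\psi$, $\Psi$ handles $\Phi_k$.

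Next I would substitute the closed forms of Lemmas \ref{lemma1} and \ref{lemma2}, interchange the order of summation so that $\sum_{d\mid n}\mu(d)$ becomes the outer sum, and for each fixed $d\mid n$ reindex the resulting inner sum over the multiples of $d$ in an interval $[l,m]$ by $s=dt$ with $\lceil l/d\rceil\le t\le\lfloor m/d\rfloor$. Using $\lceil l/d\rceil=\lfloor (l-1)/d\rfloor+1$, that inner sum has exactly $M:=\lfloor m/d\rfloor-\lfloor (l-1)/d\rfloor$ terms, and the exponents, respectively upper binomial indices, that occur run through $0,1,\dots,M-1$. Hence the two $\psi$-pieces collapse by the geometric sum $\sum_{j=0}^{M-1}2^{j}=2^{M}-1$ and the two $\psi_k$-pieces by the hockey-stick identity $\sum_{j=0}^{M-1}\binom{j}{k-1}=\binom{M}{k}$; the $\Psi$-piece factors, for each $d$, into a product $(2^{M_1}-1)(2^{M_2}-1)$ of two such inner sums, and the $\Psi_k$-piece becomes the iterated hockey-stick sum $\sum_{0\le i<M_1}\sum_{0\le j<M_2}\binom{i+j}{k-2}=\binom{M_1+M_2}{k}-\binom{M_1}{k}-\binom{M_2}{k}$, where $M_1=\lfloor m_1/d\rfloor-\lfloor (l_1-1)/d\rfloor$ and $M_2=\lfloor m_2/d\rfloor-\lfloor (l_2-1)/d\rfloor$.

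It then remains to add the three contributions term by term in $d$. In the power case this uses only the elementary identity $(a-1)+(b-1)+(a-1)(b-1)=ab-1$ with $a=2^{M_1}$, $b=2^{M_2}$, yielding $\sum_{d\mid n}\mu(d)\bigl(2^{M_1+M_2}-1\bigr)$; since $\sum_{d\mid n}\mu(d)=0$ for $n>1$ (the case $n=1$ being immediate), this equals $\sum_{d\mid n}\mu(d)\,2^{M_1+M_2}$, and $M_1+M_2$ is precisely the exponent $\lfloor m_1/d\rfloor+\lfloor m_2/d\rfloor-\lfloor (l_1-1)/d\rfloor-\lfloor (l_2-1)/d\rfloor$ of part (a). In the binomial case the three contributions are $\binom{M_2}{k}$, $\binom{M_1}{k}$ and $\binom{M_1+M_2}{k}-\binom{M_1}{k}-\binom{M_2}{k}$, whose sum telescopes to $\binom{M_1+M_2}{k}$, so part (b) falls out with no appeal to $\sum_{d\mid n}\mu(d)=0$.

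The difficulty is entirely bookkeeping rather than conceptual: one must check that the three-type split is exhaustive and mutually exclusive, confirm that after interchanging summations the number of multiples of $d$ inside $[l,m]$ is precisely $\lfloor m/d\rfloor-\lfloor (l-1)/d\rfloor$ so that the floors in the statement emerge with the correct arguments, and evaluate the double interval sum for the $\Psi_k$-piece as the iterated hockey-stick expression above; granting these together with the two one-line closed-form identities, the stated formulas for $\Phi$ and $\Phi_k$ follow.
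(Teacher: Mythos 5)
Your argument is correct, but it takes a genuinely different route from the paper's. The paper does not decompose $[l_1,m_1]\cup[l_2,m_2]$ directly: it starts from the already-known formula for $\Phi([1,m_1]\cup[l_2,m_2],n)$ (Theorem~\ref{thm:main1}, imported from \cite{ElBachraoui3}) and subtracts the contribution of the sets whose least element $i$ lies in $[1,l_1-1]$, split according to whether they meet $[l_2,m_2]$ (with least element $j$ there); those corrections are exactly the $\Psi(i,m_1,j,m_2,n)$ and $\psi(i,m_1,n)$ of Lemmas~\ref{lemma1} and~\ref{lemma2}, and the per-divisor bookkeeping is done with geometric series, the ``$-1$''s cancelling among the three pieces. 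You instead partition the sets counted by $\Phi$ additively into three types by which of the two disjoint intervals they meet, indexing by the least element inside each interval met; this buys you independence from Theorem~\ref{thm:main1} (your proof is self-contained given the two lemmas) and, unlike the paper, you actually carry out part (b), where the paper only says it ``follows by similar ideas''---your iterated hockey-stick evaluation $\sum_{0\le i<M_1}\sum_{0\le j<M_2}\binom{i+j}{k-2}=\binom{M_1+M_2}{k}-\binom{M_1}{k}-\binom{M_2}{k}$, with $M_i=\lfloor m_i/d\rfloor-\lfloor (l_i-1)/d\rfloor$, is exactly what is needed and is correct. Two small caveats: in part (a) your per-divisor total is $2^{M_1+M_2}-1$, so you genuinely need $\sum_{d\mid n}\mu(d)=0$, i.e.\ $n>1$; for $n=1$ your computation gives the true count $2^{M_1+M_2}-1$, which differs from the stated right-hand side by the empty set, so ``the case $n=1$ being immediate'' should rather be read as the theorem tacitly assuming $n>1$ (as the paper does in Section 4). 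Also, your hockey-stick step for the $\psi_k$-pieces uses the lower index $k-1$, i.e.\ $\psi_k(l,m,n)=\sum_{d\mid(l,n)}\mu(d)\binom{\lfloor m/d\rfloor-l/d}{k-1}$; that is the correct form (consistent with the $k-2$ in Lemma~\ref{lemma1}), and the $\binom{\cdot}{k}$ printed in Lemma~\ref{lemma2} is evidently a typo, so you were right to use it as you did.
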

 \begin{proof}
 (a)\ Clearly
 \begin{multline*}
 \Phi([l_1,m_1]\cup[l_2,m_2]) = \\ (1)\quad\quad\quad\qquad\qquad
 \Phi([1,m_1] \cup [l_2,m_2]) -
 \sum_{i=1}^{l_1 -1} \sum_{j=l_2}^{m_2} \Psi(i,m_1,j,m_2,n) -
 \sum_{i=1}^{l_1-1} \psi(i,m_1,n) = \\
 \sum_{d|n}\mu(d) 2^{\lfloor \frac{m_1}{d} \rfloor + \lfloor \frac{m_2}{d} \rfloor -
    \lfloor \frac{l_2-1}{d} \rfloor} -
 \sum_{i=1}^{l_1 -1} \sum_{j=l_2}^{m_2} \sum_{d|(i,j,n)} \mu(d)
    2^{\lfloor \frac{m_1}{d} \rfloor + \lfloor \frac{m_2}{d} \rfloor - \frac{i+j}{d}} -
 \sum_{i=1}^{l_1-1} \sum_{d|(i,n)} \mu(d) 2^{\lfloor \frac{m_1}{d} \rfloor - \frac{i}{d}},
 \end{multline*}
 where the second identity follows by Theorem \ref{thm:main1}, Lemma \ref{lemma1}, and
 Lemma \ref{lemma2}.
 Rearranging the triple summation in identity (1), we get
 \[
 \begin{split}
 \sum_{i=1}^{l_1 -1} \sum_{j=l_2}^{m_2} \sum_{d|(i,j,n)} \mu(d)
    2^{\lfloor \frac{m_1}{d} \rfloor + \lfloor \frac{m_2}{d} \rfloor - \frac{i+j}{d}}
 &=
  \sum_{d|n}\sum_{\substack{i=1 \\ d|i}}^{l_1 -1} \sum_{\substack{j=l_2\\d|j}}^{m_2}
  \mu(d) 2^{\lfloor \frac{m_1}{d} \rfloor + \lfloor \frac{m_2}{d} \rfloor-\frac{i+j}{d}} \\
  &=
 \sum_{d|n} \mu(d) 2^{\lfloor \frac{m_1}{d} \rfloor + \lfloor \frac{m_2}{d} \rfloor}
  \sum_{i=1}^{\lfloor\frac{l_1-1}{d}\rfloor} 2^{-i}
   \sum_{j=\lfloor \frac{l_2-1}{d} \rfloor +1}^{\lfloor \frac{m_2}{d} \rfloor} 2^{-j} \\
 (2) \qquad\qquad\qquad \quad\qquad\qquad\qquad
 &=
  \sum_{d|n} \mu(d) 2^{\lfloor \frac{m_1}{d} \rfloor + \lfloor \frac{m_2}{d} \rfloor - \lfloor\frac{l_2-1}{d}\rfloor}
   (1- 2^{-\lfloor \frac{m_2}{d} \rfloor + \lfloor\frac{l_2 -1}{d} \rfloor})
   \sum_{i=1}^{\lfloor\frac{l_1-1}{d}\rfloor} 2^{-i} \\
 & =
  \sum_{d|n} \mu(d) 2^{\lfloor \frac{m_1}{d} \rfloor + \lfloor \frac{m_2}{d} \rfloor - \lfloor\frac{l_2-1}{d}\rfloor}
  (1- 2^{-\lfloor \frac{m_2}{d} \rfloor + \lfloor\frac{l_2 -1}{d} \rfloor})
  (1- 2^{-\lfloor \frac{l_1-1}{d} \rfloor}).
 \end{split}
 \]
 Similarly the double summation in identity (1) gives
 \[ (3)\qquad\qquad\qquad\qquad
 \sum_{i=1}^{l_1-1} \sum_{d|(i,n)} \mu(d) 2^{\lfloor \frac{m_1}{d} \rfloor - \frac{i}{d}}
 = \sum_{d|n} \mu(d) 2^{\lfloor \frac{m_1}{d} \rfloor}(1- 2^{-\lfloor \frac{l_1-1}{d} \rfloor}).
 \]
 Combining identities (1), (2), and (3) we find
 \[
 \Phi([l_1,m_1]\cup[l_2,m_2]) = \sum_{d|n}\mu(d)
 2^{\lfloor \frac{m_1}{d} \rfloor + \lfloor \frac{m_2}{d} \rfloor -
    \lfloor \frac{l_1-1}{d} \rfloor - \lfloor \frac{l_2-1}{d} \rfloor}.
 \]
 This completes the proof of part (a). Part (b) follows by similar ideas.
 \end{proof}
 \begin{corollary}\label{corollary1} \emph{(\cite{Ayad-Kihel2})}
 We have
 \[ \Phi([l,m],n)= \sum_{d|n}2^{\lfloor\frac{m}{d}\rfloor - \lfloor\frac{l-1}{d}\rfloor}, \]
 \[ \Phi_k([l,m],n)= \sum_{d|n}\binom{\lfloor\frac{m}{d}\rfloor - \lfloor\frac{l-1}{d}\rfloor}{k}. \]
 \end{corollary}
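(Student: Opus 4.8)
The plan is to derive both identities as immediate specializations of Theorem \ref{main2}. The key observation is that a single interval $[l,m]$ of positive integers is a disjoint union of two nonempty such intervals whenever $l<m$, so Theorem \ref{main2} applies verbatim and the right-hand side collapses after a trivial cancellation of floor terms; no fresh gcd--sieving is required. One could instead re-run the inversion argument of Lemmas \ref{lemma1}--\ref{lemma2} directly for $[l,m]$ (partitioning the subsets of $[l,m]$ by their gcd with $n$ and invoking Theorem \ref{thm:inversion}), but the reduction through Theorem \ref{main2} is shorter.

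Assume first $l<m$ and fix any integer $c$ with $l\le c<m$, say $c=l$. Then $[l,m]=[l,c]\cup[c+1,m]$ is a disjoint union, so $\Phi([l,m],n)=\Phi([l,c]\cup[c+1,m],n)$ and $\Phi_k([l,m],n)=\Phi_k([l,c]\cup[c+1,m],n)$. Applying Theorem \ref{main2}(a) with $(l_1,m_1,l_2,m_2)=(l,c,c+1,m)$ yields
\begin{align*}
\Phi([l,m],n)&=\sum_{d\mid n}\mu(d)\,2^{\lfloor c/d\rfloor+\lfloor m/d\rfloor-\lfloor(l-1)/d\rfloor-\lfloor c/d\rfloor}\\
&=\sum_{d\mid n}\mu(d)\,2^{\lfloor m/d\rfloor-\lfloor(l-1)/d\rfloor},
\end{align*}
since $l_2-1=c$ makes the two $\lfloor c/d\rfloor$ terms cancel; Theorem \ref{main2}(b) applied the same way gives the stated binomial formula for $\Phi_k([l,m],n)$.

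What this argument misses, and what I expect to be the only point needing separate care, is the degenerate case $l=m$: a one-point interval cannot be split into two nonempty ones, so Theorem \ref{main2} is not directly available and I would verify the identity by hand. Here $\Phi([l,l],n)$ equals $1$ or $0$ according as $\gcd(l,n)=1$ or not, and $\Phi_k([l,l],n)$ is nonzero only for $k=1$; on the right $\lfloor l/d\rfloor-\lfloor(l-1)/d\rfloor$ equals $1$ if $d\mid l$ and $0$ otherwise, so $\sum_{d\mid n}\mu(d)\,2^{\lfloor l/d\rfloor-\lfloor(l-1)/d\rfloor}=\sum_{d\mid n}\mu(d)+\sum_{d\mid(l,n)}\mu(d)$, which (using $\sum_{d\mid n}\mu(d)=0$) reduces to $\sum_{d\mid(l,n)}\mu(d)$, and this is $1$ exactly when $\gcd(l,n)=1$; the binomial version is the same computation with $\binom{\cdot}{k}$ in place of $2^{\cdot}$. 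As a consistency check on the main case, the right-hand side produced by the split is manifestly independent of the auxiliary parameter $c$, matching the fact that the left-hand side is.
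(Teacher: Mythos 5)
Your argument is correct and is essentially the paper's: both obtain the corollary by specializing Theorem \ref{main2} to a trivial splitting of $[l,m]$ --- the paper takes $l_1=l$, $m_1=m-1$, $l_2=m_2=m$, you take $[l,l]\cup[l+1,m]$ --- after which the floor terms cancel (and, like you, the paper clearly intends the factor $\mu(d)$ in the right-hand sides, which is missing from the displayed statement). Your separate verification of the degenerate case $l=m$, which neither splitting covers since Theorem \ref{main2} requires $l_i\le m_i$, is a small point the paper's one-line proof passes over silently.
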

 \begin{proof}
 Use Theorem \ref{main2} with $l_1=l$, $m_1=m-1$, and $l_2=m_2=m$.
 \end{proof}

 \section{Combinatorial identities}
 In this section we assume that $n >1$.
 \begin{theorem} \label{identities1}
 We have:
 \[ (a)\quad
  \sum_{d|n} \mu(d) 2^{\lfloor \frac{m}{d} \rfloor - \lfloor \frac{m-1}{d} \rfloor} =
  \sum_{d|n} \mu(d) \left( \left\lfloor \frac{m}{d} \right\rfloor -
  \left\lfloor \frac{m-1}{d} \right\rfloor \right) =
  \begin{cases}
  0,\ \text{if $\gcd(m,n) >1$} \\
  1,\ \text{if $\gcd(m,n) =1$}.
  \end{cases}
  \]
  \begin{multline*} (b)\quad
  \sum_{d|n} \mu(d) 2^{\lfloor \frac{m+1}{d} \rfloor - \lfloor \frac{m-1}{d} \rfloor} = \\
  \begin{cases}
  1,\ \text{if $(m,n)>1$ and $(m+1,n)>1$}\\
  2,\ \text{if $(m,n)=1$ and $(m+1,n)>1$ or $(m,n)>1$ and
           $(m+1,n)=1$} \\
  3,\ \text{if $(m,n)=(m+1,n)=1$}.
  \end{cases}
  \end{multline*}
 \end{theorem}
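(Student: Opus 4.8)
The plan is to recognize each alternating sum as the phi function of a one- or two-element interval via Corollary \ref{corollary1}, and then to evaluate that phi function by inspection; part (a) also admits an immediate direct check. Throughout I use the hypothesis $n>1$, which gives $\sum_{d\mid n}\mu(d)=0$.

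\emph{Part (a).} For each divisor $d$ of $n$ the quantity $\lfloor m/d\rfloor-\lfloor(m-1)/d\rfloor$ counts the multiples of $d$ in the one-point set $\{m\}$, so it is $1$ if $d\mid m$ and $0$ otherwise; in either case this exponent lies in $\{0,1\}$, where $2^{x}=1+x$. Hence
\[
\sum_{d\mid n}\mu(d)\,2^{\lfloor m/d\rfloor-\lfloor(m-1)/d\rfloor}
=\sum_{d\mid n}\mu(d)+\sum_{d\mid n}\mu(d)\bigl(\lfloor m/d\rfloor-\lfloor(m-1)/d\rfloor\bigr),
\]
whose first term vanishes because $n>1$; that is the first equality. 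The remaining sum is supported on the divisors of $m$, so it collapses to $\sum_{d\mid\gcd(m,n)}\mu(d)$, which is $1$ if $\gcd(m,n)=1$ and $0$ otherwise. (From the phi-function viewpoint the first sum is simply $\Phi([m,m],n)$ by Corollary \ref{corollary1} with $l=m$, and $\{m\}$ has a single nonempty subset, relatively prime to $n$ iff $\gcd(m,n)=1$.)

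\emph{Part (b).} By Corollary \ref{corollary1} with lower endpoint $m$ and upper endpoint $m+1$,
\[
\sum_{d\mid n}\mu(d)\,2^{\lfloor(m+1)/d\rfloor-\lfloor(m-1)/d\rfloor}=\Phi([m,m+1],n),
\]
so it suffices to count the nonempty $X\subseteq\{m,m+1\}$ with $\gcd(X,n)=1$. The candidates are $\{m\}$, $\{m+1\}$ and $\{m,m+1\}$; the point is that $\gcd(\{m,m+1\},n)=\gcd\bigl(\gcd(m,m+1),n\bigr)=1$ since consecutive integers are coprime, so $\{m,m+1\}$ is always counted, while $\{m\}$ and $\{m+1\}$ are counted precisely when $\gcd(m,n)=1$ and $\gcd(m+1,n)=1$ respectively. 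Adding up, $\Phi([m,m+1],n)=1+\varepsilon_m+\varepsilon_{m+1}$ with $\varepsilon_j=1$ if $\gcd(j,n)=1$ and $\varepsilon_j=0$ otherwise, which is exactly the stated three-case formula. One can also avoid phi functions: the exponent $x_d$ counts multiples of $d$ in $\{m,m+1\}$, so it lies in $\{0,1,2\}$ and equals $2$ only for $d=1$; then $2^{x_d}=1+x_d+\binom{x_d}{2}$, and summing against $\mu(d)$ (using $\sum_{d\mid n}\mu(d)=0$, $\binom{x_d}{2}=0$ for $d>1$, and splitting $x_d$ so as to apply part (a) to $m$ and to $m+1$) gives the same result.

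I do not expect a genuine obstacle. What needs a little care is checking that Corollary \ref{corollary1} may legitimately be applied at the degenerate left endpoint $l=m$ (for part (b) one may instead invoke Theorem \ref{main2} for the set union $[m,m]\cup[m+1,m+1]$, where the overlapping terms $\lfloor m/d\rfloor$ cancel, and for part (a) the direct argument above uses no phi function at all), and, for part (b), isolating the coprimality of consecutive integers, which is exactly what produces the constant $1$ and distinguishes part (b) from part (a). The intermediate binomial and $1+\sum_{d\mid n}\mu(d)(\cdots)$ expressions mentioned in the Introduction follow from the same identities, e.g.\ $2^{x}=1+\binom{x+1}{2}$ on $\{0,1,2\}$.
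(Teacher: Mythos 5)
Your proof is correct and follows essentially the same route as the paper, which simply applies Corollary \ref{corollary1} with $l=m$ (and $k=1$) for part (a) and to the interval $[m,m+1]$ for part (b); you merely make explicit the counting of relatively prime subsets of $\{m\}$ and $\{m,m+1\}$ that the paper leaves implicit. The supplementary direct argument for (a) via $2^{x}=1+x$ on $\{0,1\}$ and $\sum_{d\mid n}\mu(d)=0$ is a fine equivalent variant, not a different method.
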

 \begin{proof}
(a) Apply Corollary~\ref{corollary1} to $l=m$ and $k=1$. \\
(b) Apply Corollary~\ref{corollary1} to the interval $[m,m+1]$.
 \end{proof}
 \begin{theorem} \label{identities2}
 We have:
 \begin{multline*} (a)\quad
 \sum_{d|n} \mu(d) \left(\left\lfloor \frac{m+1}{d} \right\rfloor - \left\lfloor \frac{m-1}{d} \right\rfloor \right)= \\
 \begin{cases}
  0,\ \text{if $(m,n)>1$ and $(m+1,n)>1$}\\
  1,\ \text{if $(m,n)=1$ and $(m+1,n)>1$ or $(m,n)>1$ and
           $(m+1,n)=1$} \\
  2,\ \text{if $(m,n)=(m+1,n)=1$}.
  \end{cases}
  \end{multline*}
  \begin{multline*} (b)\quad
  \sum_{d|n} \mu(d) \binom{\lfloor \frac{m+1}{d} \rfloor - \lfloor \frac{m-1}{d} \rfloor +1}{2} = \\
  \begin{cases}
  1,\ \text{if $(m,n)>1$ and $(m+1,n)>1$}\\
  2,\ \text{if $(m,n)=1$ and $(m+1,n)>1$ or $(m,n)>1$ and
           $(m+1,n)=1$} \\
  3,\ \text{if $(m,n)=(m+1,n)=1$}.
  \end{cases}
  \end{multline*}
 \end{theorem}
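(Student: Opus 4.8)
The plan is to deduce both identities from the combinatorics of the two-point interval $[m,m+1]$, exactly as Theorem~\ref{identities1} was obtained. First I would note that for each divisor $d\mid n$ the difference $\lfloor (m+1)/d\rfloor-\lfloor (m-1)/d\rfloor$ equals $\lfloor (m+1)/d\rfloor-\lfloor (l-1)/d\rfloor$ with $l=m$, i.e. it counts the multiples of $d$ lying in $\{m,m+1\}$. Hence Corollary~\ref{corollary1} applied to the interval $[m,m+1]$ gives, for every $k\ge 1$,
\[
\Phi_k([m,m+1],n)=\sum_{d\mid n}\mu(d)\binom{\lfloor (m+1)/d\rfloor-\lfloor (m-1)/d\rfloor}{k}.
\]

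Next I would evaluate the left-hand sides by hand. The nonempty subsets of $\{m,m+1\}$ are $\{m\}$, $\{m+1\}$ and $\{m,m+1\}$; the singleton $\{m\}$ is relatively prime to $n$ iff $(m,n)=1$, the singleton $\{m+1\}$ iff $(m+1,n)=1$, and $\{m,m+1\}$ always is, since $\gcd(m,m+1)=1$. Consequently
\[
\Phi_1([m,m+1],n)=\bigl[(m,n)=1\bigr]+\bigl[(m+1,n)=1\bigr],\qquad \Phi_2([m,m+1],n)=1 .
\]
The case $k=1$ of the displayed formula is exactly the sum in part~(a), and its value $0$, $1$, or $2$ is read off from the three $\gcd$-cases; alternatively it is the value of Theorem~\ref{identities1}(b) diminished by $\Phi_2([m,m+1],n)=1$.

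For part~(b) I would use the Pascal-type identity $\binom{x+1}{2}=\binom{x}{2}+\binom{x}{1}$ with $x=\lfloor (m+1)/d\rfloor-\lfloor (m-1)/d\rfloor$. Multiplying by $\mu(d)$, summing over $d\mid n$, and invoking the $k=2$ and $k=1$ cases above yields
\[
\sum_{d\mid n}\mu(d)\binom{\lfloor (m+1)/d\rfloor-\lfloor (m-1)/d\rfloor+1}{2}=\Phi_2([m,m+1],n)+\Phi_1([m,m+1],n)=\Phi([m,m+1],n),
\]
so part~(b) is just Theorem~\ref{identities1}(b) restated, giving the values $1$, $2$, $3$.

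There is no genuine difficulty here: the statement is a short consequence of the enumeration already carried out for Theorem~\ref{identities1}. The only steps needing care are checking that Corollary~\ref{corollary1} is being applied to the degenerate interval $[m,m+1]$ with the right endpoints (so that the upper binomial index is precisely $\lfloor (m+1)/d\rfloor-\lfloor (m-1)/d\rfloor$), and the two-line bookkeeping of the Pascal identity together with the three cases for $(m,n)$ and $(m+1,n)$.
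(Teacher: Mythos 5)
Your argument is correct, but it takes a genuinely different route from the paper's. The paper proves both parts in one stroke by applying Theorem~\ref{main2}(b) to the union $[l_1,m_1]\cup[l_2,m_2]$ with $l_1=m$, $m_1=m+1$ and $l_2=m_2=n$: since $d\mid n$ gives $\lfloor n/d\rfloor-\lfloor (n-1)/d\rfloor=1$, the upper binomial index comes out as $\lfloor (m+1)/d\rfloor-\lfloor (m-1)/d\rfloor+1$ automatically, so part (b) is exactly the case $k=2$ (the relatively prime $2$-subsets of $\{m,m+1,n\}$ being $\{m,m+1\}$ always, plus $\{m,n\}$ and $\{m+1,n\}$ according to the $\gcd$ cases), and part (a) is the case $k=1$ after discarding $\sum_{d\mid n}\mu(d)=0$. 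You instead stay inside the single interval $[m,m+1]$: you use Corollary~\ref{corollary1} (with the factor $\mu(d)$ that is evidently missing from its printed statement) to identify the sum in (a) with $\Phi_1([m,m+1],n)$, and for (b) you use Pascal's rule $\binom{x+1}{2}=\binom{x}{2}+\binom{x}{1}$ to write the sum as $\Phi_2([m,m+1],n)+\Phi_1([m,m+1],n)=\Phi([m,m+1],n)$, then count the three subsets of $\{m,m+1\}$ by hand. The paper's device of adjoining the point $n$ produces the shift $+1$ in the binomial index uniformly in $k$ with no binomial manipulation; your route is more elementary in that it needs only the one-interval formula of Ayad--Kihel plus Pascal's identity, it avoids the appeal to $\sum_{d\mid n}\mu(d)=0$ in part (a) (so it does not even use $n>1$ there), and it makes transparent that part (b) is Theorem~\ref{identities1}(b) in disguise.
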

 \begin{proof}
 (a) Apply Theorem \ref{main2}(b) to $l_1=m$, $m_1=m+1$, $l_2=m_2=n$, and $k=1$ and use the fact that
 $\sum_{d|n} \mu(d) = 0$ whenever $n>1$. \\
 (b) Apply Theorem \ref{main2}(b) to $l_1=m$, $m_1=m+1$, $l_2=m_2=n$, and $k=2$.
 \end{proof}
 %


\end{document}